\newtheorem{proposition}{Proposition}
\newtheorem{assumption}{Assumption} 
\newtheorem{corollary}{Corollary}
\newtheorem{remark}{Remark}
\newtheorem{definition}{Definition}
\newcommand{\R}{{\mathbb R}} 
\newcommand{\N}{{\mathbb N}} 
\title{A Hamilton-Jacobi approach of sensitivity of ODE flows and switching points in optimal control problems}
\author{{
V. Riquelme$^{1}$}\\[2mm]
$^{1}$ Departamento de Matem\'atica, Universidad T\'ecnica Federico Santa Mar\'ia,\\Avenida Espa\~na 1680, Valpara\'iso, Chile\\[2mm]
{\tt 
victor.riquelmef@usm.cl}\\[2mm]
}
\begin{document}
\maketitle

\begin{abstract}

In optimal control problems of control-affine systems, whose solutions are bang-bang or singular type, verification of optimality using the Hamilton-Jacobi-Bellman (HJB) equation involves the computation of partial derivatives of  switching times and switching states  with respect to initial conditions (time and state). 
In this paper, we establish a formula for the partial derivatives of ordinary differential equations (ODE) flows with respect to initial conditions, which is more suitable for using in HJB equation than such provided by the classical theory of ODE. We apply the obtained results to the sensitivity analysis of  hitting time and state of a reachable set, that in an optimal control problem can represent a switching locus.

\medskip

\emph{Keywords: sensitivity, ode flow, hitting time, control-affine, optimal control}

\end{abstract}

\if{
\emph{Highlights:}
\begin{itemize}
\item We derive a sensitivity formula for an ODE flow, in the form of a partial differential equation
\item We prove the differentiability, and derive sensitivity formulas, for the first hitting time of a set, under transversality conditions
\item We use the obtained results for verification of optimality, in optimal control problems, with HJB techniques 
\end{itemize}
}\fi

\medskip
\section{Introduction}

An interesting class of optimal control problems, governed by ordinary differential equations (ODE), consist in control-affine systems with control-affine cost \cite{sontag1998}, whose solutions are characterized by switching functions, obtained via Pontryagin's maximum principle \cite{clarke2013}. For this type of problems, optimal controls along the optimal trajectories consist of sequences of consecutive bangs and/or singular arcs. Along each bang arc, the control takes a constant extreme value. This procedure naturally induces feedback controls in the state-time space, which generates a patchy vector field \cite{ancona1999}. Thus,  if the optimal control is composed only by bangs, the dynamics of the problem along an optimal solution follow a sequence of uncontrolled ordinary differential equations (i.e., with constant controls), up to the first time that the solution of the system hits the boundary of each patch (which we refer to as \emph{hitting time}), that is, the corresponding switching curve. \medskip

A usual procedure to solve an optimal control problem consists in finding  extremals via Pontryagin's maximum principle \cite{clarke2013}, and then proving that the associated controls are optimal, building a verification function (candidate to value function) using these controls, and proving that  this verification function is solution of the Hamilton-Jacobi-Bellman (HJB) equation \cite{BC97,cesari1983,vinter2000} of the problem (see, for instance, \cite[Section 8.7]{bressan2007} for a discussion on the verification of the sufficient conditions for a control to be optimal). For control-affine problems, this last step involves the computation of the partial derivatives of the times and states of switch, which in this case, correspond to the hitting times and hitting states of the boundaries of each patch. Since the dynamics of the problem, up to the hitting time of the boundary of the patch, follow an uncontrolled differential equation, the problem of finding the partial derivatives of the hitting times is reduced to compute the partial derivative of the first hitting time and state of the solution of an ODE with respect to its initial data (both time and state). \medskip

Sensitivity analysis and sufficient conditions of optimality of the switching times, in optimal control problems, has been derived with basis on the classical variational equation associated to an ODE flow, recalled in  \eqref{eq:ODE_dflujo_dx} below \cite{kim,vossen}.  However, this formulation is not suitable enough for the verification of optimality using  HJB equation, since the solution of this variational equation involves the computation of the resolvent of a non-autonomous ODE, depending on the solution of the original system. In this work, we present a different approach to obtain sensitivity formulas, based on the Calculus of Variations, that are better suited to the context of HJB techniques.  \medskip

This article is structured as follows. In Section \ref{sec:sensitivity} we study the sentitivity of the solution of an ODE with respect to the initial data. In Section \ref{sec:hittingtimes}, we apply the obtained results to study the sensitivity of the first hitting time of a set, which we define as the first time that a trajectory, solution of an ODE, hits a specified set. We also derive formulas for the sensitivity of the first hitting state. In Section \ref{sec:ejemplo_gen}, we illustrate how to apply the obtained results to an optimal control problem. This method of application is then discussed in Section \ref{sec:conclusions}.

\medskip
\section{Sensitivity of an ODE flow}\label{sec:sensitivity}

Consider the autonomous ordinary differential equation
\begin{equation}\label{eq:ODE_nd}
\dot x = F(x)
\end{equation} 
given  by the vector field  $F:O\subseteq\R^n\rightarrow\R^n$ defined on an open set $O\subseteq\R^n$. If $F$ is $\mathcal C^1$, for $(x_0,t_0)\in O\times\R$ there exists an open interval $I\subseteq\R$ containing $t_0$ and a unique solution $x:I\rightarrow\R^n $ of \eqref{eq:ODE_nd} with $x(t_0)=x_0$ \cite{hirschsmale2013}. Under these assumptions, the flow $x(t;x_0,t_0)$ is defined as the value of the unique solution of \eqref{eq:ODE_nd} at time $t$, with $x(t_0)=x_0$. When $I=\R$ for all $x_0\in O=\R^n$, we will say that the flow is global. In this case, the flow is continuously differentiable with respect the initial state $x_0$, and its differential $D_{x_0}x(t;x_0,t_0)$  is solution of the following variational equation \cite{hirschsmale2013}
\begin{equation}\label{eq:ODE_dflujo_dx}
\left\{\quad\begin{split}
\frac{d}{dt}D_{x_0}x(t;x_0,t_0) =&\, DF(x(t;x_0,t_0))D_{x_0}x(t;x_0,t_0),\quad t>t_0,\\[2mm]
D_{x_0}x(t_0;x_0,t_0) =&\, I_n,
\end{split}\right.
\end{equation}
where $I_n$ denotes the $n-$dimensional identity matrix and $DF$ the Jacobian of the vector field $F$. 
\medskip

In what follows, we present a different approach to sensitivity analysis. Through this article, we assume the following hypothesis:

\begin{assumption}\label{ass:global}
Suppose that $F:\R^n\rightarrow\R^n$, of class $\mathcal C^1$, defines a global flow $x(t;x_0,t_0)=(x_i(t;x_0,t_0))_{i=1\dots n}$. 
\end{assumption}
\smallskip

Under the above assumption, we establish our main result for the sensitivity analysis of system \eqref{eq:ODE_nd}.
\begin{proposition}\label{prop:sensibilidad_nd}
Suppose Assumption \ref{ass:global} holds. Then, $x(t;x_0,t_0)$ is differentiable with respect to $(x_0,t_0)$, and for all $t\in\R$,
\begin{equation}\label{eq:ODE_PDE_nd}
\frac{\partial x}{\partial t_0}(t;x_0,t_0) +  D_{x_0}x(t;x_0,t_0)F(x_0) = 0,\qquad (x_0,t_0)\in (-\infty,t)\times\R^n.
\end{equation}
\end{proposition}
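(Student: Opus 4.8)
The plan is to reduce everything to the ``initial time zero'' flow $\phi(s,x_0)\defegal x(s;x_0,0)$ and to exploit the time-translation invariance of the autonomous equation \eqref{eq:ODE_nd}. First I would record that, by uniqueness of solutions, $x(t;x_0,t_0)=\phi(t-t_0,x_0)$ for all $t,t_0$: indeed $s\mapsto\phi(s-t_0,x_0)$ solves \eqref{eq:ODE_nd} and equals $x_0$ at $s=t_0$. Under Assumption \ref{ass:global} the map $(s,x_0)\mapsto\phi(s,x_0)$ is of class $\mathcal C^1$ jointly in its arguments: continuity and $\mathcal C^1$ dependence in $x_0$ is the classical smooth-dependence theorem, while $\partial_s\phi(s,x_0)=F(\phi(s,x_0))$ is continuous in $(s,x_0)$ and $D_{x_0}\phi$ solves \eqref{eq:ODE_dflujo_dx}, hence is continuous in $(s,x_0)$ as well. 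Composing with the smooth map $(x_0,t_0)\mapsto(t-t_0,x_0)$ then shows that $x(t;\cdot,\cdot)$ is differentiable (indeed $\mathcal C^1$) on $(-\infty,t)\times\R^n$, which is the first claim, and the chain rule yields $D_{x_0}x(t;x_0,t_0)=D_{x_0}\phi(t-t_0,x_0)$ together with
\[
\frac{\partial x}{\partial t_0}(t;x_0,t_0) \;=\; -\,\partial_s\phi(t-t_0,x_0)\;=\;-\,F\bigl(\phi(t-t_0,x_0)\bigr)\;=\;-\,F\bigl(x(t;x_0,t_0)\bigr).
\]

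The remaining ingredient is the identity $D_{x_0}\phi(s,x_0)\,F(x_0)=F(\phi(s,x_0))$ for every $s$, i.e.\ the invariance of $F$ under its own flow. I would prove it by uniqueness for linear ODEs: fixing $x_0$, set $v(s)\defegal D_{x_0}\phi(s,x_0)F(x_0)$ and $w(s)\defegal F(\phi(s,x_0))$. Multiplying \eqref{eq:ODE_dflujo_dx} (written with $t_0=0$) on the right by the constant vector $F(x_0)$ gives $\dot v(s)=DF(\phi(s,x_0))\,v(s)$, whereas the chain rule together with $\dot\phi=F(\phi)$ gives $\dot w(s)=DF(\phi(s,x_0))\,w(s)$; moreover $v(0)=w(0)=F(x_0)$. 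Hence $v\equiv w$.

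Combining the two steps, $\dfrac{\partial x}{\partial t_0}(t;x_0,t_0)=-F(x(t;x_0,t_0))=-D_{x_0}\phi(t-t_0,x_0)F(x_0)=-D_{x_0}x(t;x_0,t_0)F(x_0)$, which is precisely \eqref{eq:ODE_PDE_nd}. (An alternative derivation of the formula, once joint differentiability is in hand, is to differentiate at $h=0$ the identity $x\bigl(t;x(t_0+h;x_0,t_0),t_0+h\bigr)=x(t;x_0,t_0)$, constant in $h$ by the semigroup property, using $\tfrac{d}{dh}\big|_{h=0}x(t_0+h;x_0,t_0)=F(x_0)$.) I expect the only genuinely delicate point to be the joint $\mathcal C^1$ regularity of the flow that legitimizes the chain rule in the variable $t_0$; the flow-invariance identity and the rest are routine consequences of the classical variational equation \eqref{eq:ODE_dflujo_dx} and uniqueness.
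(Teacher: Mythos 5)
Your proof is correct, but it follows a genuinely different route from the paper. You work entirely on the Lagrangian/trajectorial side: you reduce to the time-zero flow $\phi(s,x_0)=x(s;x_0,0)$ via time-translation invariance, compute $\partial x/\partial t_0=-F(x(t;x_0,t_0))$, and then establish the flow-invariance identity $D_{x_0}\phi(s,x_0)F(x_0)=F(\phi(s,x_0))$ by checking that both sides solve the linear variational equation \eqref{eq:ODE_dflujo_dx} with the same initial data. The paper instead views each component $x_i(t;x_0,t_0)$ as the value function of a degenerate (control-free) Calculus of Variations problem and reads off \eqref{eq:ODE_PDE_nd} directly as the Hamilton--Jacobi equation with Hamiltonian $H(x,p)=\langle p,F(x)\rangle$; the identity \eqref{eq:dflujo_dx0_gen_nd} is then deduced afterwards as Corollary \ref{cor:sensibilidad_nd}. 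So the logical order is reversed: what the paper obtains as a corollary is the key lemma of your argument. Your approach buys a fully classical, self-contained derivation resting only on \eqref{eq:ODE_dflujo_dx} and uniqueness for linear ODEs (and it makes the joint $\mathcal C^1$ regularity in $(x_0,t_0)$ more explicit than the paper does); the paper's approach buys the Hamiltonian viewpoint it wants to advertise --- the PDE is produced without ever writing down $DF$, which is precisely the feature emphasized in the remark following the proposition. Both are valid; note only that your route does use the Jacobian $DF$ inside the proof, though of course not in the final formula.
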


\begin{proof}
The differentiability of $x(t;x_0,t_0)$ with respect to $(x_0,t_0)$ is obtained from the Theorem of smoothness of flows with respect to initial conditions, and the fact that the dynamics is autonomous \cite{hirschsmale2013}.\medskip

Now, consider a fixed time $t$, index $i\in\{1,\dots,n\}$, and the family of (uncontrolled) problems parametrized by $(x_0,t_0)$, with $t_0\leq t$:
\begin{equation}\label{eq:problema_v}
v_{t,i}(x_0,t_0) = \min\{\, \Phi_i(x(t)) = x_i(t)\,|\, \dot x(s) = F(x(s)), \, s>t_0;\, x(t_0)=x_0\,\}.
\end{equation}

Uniqueness of solutions of \eqref{eq:ODE_nd} implies that $v_{t,i}(x_0,t_0)=x_i(t;x_0,t_0)$ (the i-th coordinate of $x(t;x_0,t_0)$). On the other hand, Problem \eqref{eq:problema_v} can be viewed in the framework of Calculus of Variations. Define the Hamiltonian $H(x,p) = \langle p, F(x)\rangle$ (where $\langle\cdot,\cdot\rangle$ denotes the standard inner product in $\R^n$), and denote $x_0=(x_{0,1},\dots,x_{0,n})$. Thus, $v_{t,i}(\cdot)$ satisfies the Hamilton-Jacobi equation \cite{vinter2000}
\begin{equation}\label{eq:PDE_ODE}
\frac{\partial v_{t,i}}{\partial t_0}(x_0,t_0) + H\left(x_0,D_{x_0}v_{t,i}(x_0,t_0)\right) = 0,\quad t_0<t;\quad v_{t,i}(x_0,t) = x_{0,i},
\end{equation}
which translates into
\begin{equation}\label{eq:PDE_ODE_F_gen}
\frac{\partial x_i}{\partial t_0}(t;x_0,t_0) +  \langle D_{x_0}x_i(t;x_0,t_0), F(x_0)\rangle = 0,\quad t_0<t;\quad x_i(t;x_0,t) = x_{0,i}.
\end{equation}
Notice that \eqref{eq:PDE_ODE_F_gen} is the componentwise version of \eqref{eq:ODE_PDE_nd}, along with the condition $x(t;x_0,t)=x_0$, which proves the proposition.
\end{proof}
\medskip

\begin{remark}

Notice that the sensitivity via the variational equation \eqref{eq:ODE_dflujo_dx} follows a Lagrangian (trajectorial) approach: we follow the evolution of a given trajectory and compute how the sensitivity of said trajectory evolves along the time. Instead, in Proposition \ref{prop:sensibilidad_nd}, we propose a Hamiltonian approach, in which the sensitivity at time $t$ is obtained for all initial conditions $(x_0,t_0)$. The trajectories obtained following the Lagrangian approach would correspond to the characteristic curves of the Hamiltonian approach.\medskip

Notice that the sensitivity formula \eqref{eq:ODE_PDE_nd} in Proposition \ref{prop:sensibilidad_nd} does not require the Jacobian matrix $DF$, but only the vector field $F$.
\end{remark}
\medskip

Two immediate corollaries can be obtained from Proposition \ref{prop:sensibilidad_nd}.
\medskip

\begin{corollary}\label{cor:sensibilidad_nd}
Under the same hypotheses than Proposition \ref{prop:sensibilidad_nd}, we have 
\begin{equation}\label{eq:dflujo_dx0_gen_nd}
D_{x_0}x(t;x_0,t_0) F(x_0) = F(x(t;x_0,t_0)).
\end{equation}
\end{corollary}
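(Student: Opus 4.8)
The plan is to derive \eqref{eq:dflujo_dx0_gen_nd} directly from the sensitivity identity \eqref{eq:ODE_PDE_nd} of Proposition \ref{prop:sensibilidad_nd}, by relating the two partial derivatives appearing there, namely $\partial x/\partial t_0$ and $D_{x_0}x$, through the group property of the flow. The key observation is that, because the system \eqref{eq:ODE_nd} is autonomous, the flow depends on $t$ and $t_0$ only through the difference $t-t_0$; equivalently, $x(t;x_0,t_0)=x(t-t_0;x_0,0)$. Differentiating this relation with respect to $t_0$ gives
\begin{equation*}
\frac{\partial x}{\partial t_0}(t;x_0,t_0) = -\,\frac{\partial x}{\partial t}(t;x_0,t_0) = -\,F\big(x(t;x_0,t_0)\big),
\end{equation*}
where the last equality is just the fact that $s\mapsto x(s;x_0,t_0)$ solves \eqref{eq:ODE_nd}. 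Substituting this expression for $\partial x/\partial t_0$ into \eqref{eq:ODE_PDE_nd} and rearranging yields exactly $D_{x_0}x(t;x_0,t_0)F(x_0)=F(x(t;x_0,t_0))$, which is \eqref{eq:dflujo_dx0_gen_nd}.

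First I would record the autonomy/time-translation property of the flow and justify it from uniqueness of solutions together with Assumption \ref{ass:global}: if $x(\cdot)$ solves \eqref{eq:ODE_nd} with $x(t_0)=x_0$, then $s\mapsto x(s+\tau)$ solves the same equation with initial condition $x_0$ at time $t_0-\tau$, so $x(t-t_0;x_0,0)=x(t;x_0,t_0)$ for all admissible arguments, and globality ensures all these quantities are defined for every real argument. Second I would differentiate in $t_0$, using the chain rule on $x(t-t_0;x_0,0)$ to get $\partial x/\partial t_0 = -\partial x/\partial t$, and identify $\partial x/\partial t$ with $F(x(t;x_0,t_0))$ since the flow solves the ODE in its first argument. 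Third I would plug into \eqref{eq:ODE_PDE_nd} and conclude.

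Alternatively — and perhaps more in the spirit of the paper — one can avoid invoking time-translation explicitly: differentiate the identity $x(t;x_0,t_0)=x(t;x(s;x_0,t_0),s)$ (valid for $t_0\le s\le t$ by the semigroup property) with respect to $s$ at $s=t_0$. The left side has zero derivative; the right side, by the chain rule, gives $D_{x_0}x(t;x_0,t_0)\,F(x_0) + \partial x/\partial t_0(t;x_0,t_0) = 0$, which is again \eqref{eq:ODE_PDE_nd}, and combined with $\partial x/\partial t_0 = -F(x(t;\cdot))$ gives the claim; but in fact either route reduces to the same two ingredients.

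The only real obstacle is a bookkeeping one: making sure the sign conventions in \eqref{eq:ODE_PDE_nd} and in the time-translation identity are handled consistently, so that $\partial x/\partial t_0$ comes out as $-F(x(t;x_0,t_0))$ and not $+F(x(t;x_0,t_0))$. There is no analytic difficulty, since differentiability of the flow in all of its arguments is already granted by Proposition \ref{prop:sensibilidad_nd} and Assumption \ref{ass:global}; the corollary is essentially an algebraic rearrangement of the proposition once the relation between $\partial x/\partial t_0$ and $F\circ x$ is made explicit.
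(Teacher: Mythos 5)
Your argument is exactly the paper's proof: use autonomy of the system to write $\partial x/\partial t_0 = -\partial x/\partial t = -F(x(t;x_0,t_0))$ and substitute into \eqref{eq:ODE_PDE_nd}. The extra justification you give for the time-translation identity is correct but not needed beyond what the paper already invokes.
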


\begin{proof}
Since $x(\cdot)$ is the solution of an autonomous system, it holds
\begin{equation}\label{eq:dflujo_dt0_gen}
\frac{\partial x}{\partial t_0}(t;x_0,t_0) = -\frac{\partial x}{\partial t}(t;x_0,t_0) = -F(x(t;x_0,t_0)).
\end{equation}

Combining \eqref{eq:ODE_PDE_nd} and \eqref{eq:dflujo_dt0_gen}, we conclude the result.
\end{proof}
\medskip

For the case of one-dimensional systems, we obtain:
\begin{corollary}\label{cor:sensibilidad_1d}
Suppose $F:\R\rightarrow\R$ of class $\mathcal C^1$, such that Assumption \ref{ass:global} is satisfied. Let $x_0\in\R$ such that $F(x_0)\neq0$. Then, for all $t > t_0$,
\begin{equation}\label{eq:dflujo_dx0_1d}
\frac{\partial x}{\partial x_0}(t;x_0,t_0) = \frac{F(x(t;x_0,t_0))}{F(x_0)}.
\end{equation}
\end{corollary}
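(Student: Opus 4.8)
The plan is to obtain this as an immediate specialization of Corollary \ref{cor:sensibilidad_nd}. Equation \eqref{eq:dflujo_dx0_gen_nd} holds in every dimension, and in the scalar case $n=1$ the differential $D_{x_0}x(t;x_0,t_0)$ is just the scalar partial derivative $\frac{\partial x}{\partial x_0}(t;x_0,t_0)$, while the matrix--vector product on its left-hand side collapses to ordinary multiplication. Hence \eqref{eq:dflujo_dx0_gen_nd} reads $\frac{\partial x}{\partial x_0}(t;x_0,t_0)\,F(x_0) = F(x(t;x_0,t_0))$ for all $t>t_0$, the range on which Proposition \ref{prop:sensibilidad_nd}, and therefore Corollary \ref{cor:sensibilidad_nd}, applies.

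The only remaining step is to divide both sides by $F(x_0)$, which is legitimate precisely because of the standing hypothesis $F(x_0)\neq 0$. This yields the asserted identity \eqref{eq:dflujo_dx0_1d}. As a consistency check one could also argue directly: in one dimension $\dot x = F(x)$ is separable, so $\int_{x_0}^{x(t;x_0,t_0)} \frac{ds}{F(s)} = t-t_0$ on a neighborhood where $F$ does not vanish, and differentiating this relation in $x_0$ via the fundamental theorem of calculus gives $\frac{1}{F(x(t;x_0,t_0))}\frac{\partial x}{\partial x_0}(t;x_0,t_0) = \frac{1}{F(x_0)}$, which is the same formula; this also confirms that $F$ stays nonzero along the trajectory, since by uniqueness of solutions of \eqref{eq:ODE_nd} a scalar flow cannot reach an equilibrium in finite time, so the right-hand side of \eqref{eq:dflujo_dx0_1d} is well defined.

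There is essentially no genuine obstacle in this proof: it is a one-line reduction. If anything merits a sentence of care, it is only the bookkeeping that in dimension one the objects in \eqref{eq:dflujo_dx0_gen_nd} are scalars, and the remark that the denominator $F(x_0)$ is nonzero by assumption, which is exactly what the statement supposes.
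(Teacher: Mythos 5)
Your proof is correct and takes essentially the same route as the paper: specialize Corollary \ref{cor:sensibilidad_nd} to $n=1$ and divide by $F(x_0)\neq 0$. The separation-of-variables consistency check is a nice extra but not needed.
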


\begin{proof}
From Corollary \ref{cor:sensibilidad_nd}, we directly obtain
\begin{equation}\label{eq:dflujo_dx0}
F(x_0) \frac{\partial x}{\partial x_0}(t;x_0,t_0) = F(x(t;x_0,t_0)).
\end{equation}

Since $F(x_0)\neq0$, dividing by $F(x_0)$ we conclude the result.
\end{proof}
\medskip
\begin{remark}
In the one-dimensional case, when $F(x_0)=0$, 
Corollary \ref{cor:sensibilidad_1d} does not give much information.
However, since $x_0$ is an equilibrium of the dynamics, for all $t\geq t_0$ we have $x(t;x_0,t_0) = x_0$; using \eqref{eq:ODE_dflujo_dx} we get 
\begin{equation}
\frac{\partial x}{\partial x_0}(t;x_0,t_0) = e^{F'(x_0)(t-t_0)}.
\end{equation}

\end{remark}

\medskip
\section{Application to hitting times}\label{sec:hittingtimes}

In this section, we apply the results of the previous section to hitting times on the $(x,t)$ space. 
For this, we define the hitting time of a set by the solution of an ODE, and then we show that the hitting time (and the corresponding hitting state) follow a sensitivity relation given by a PDE, similar to that shown in Proposition \ref{prop:sensibilidad_nd}.
\medskip

\begin{definition}
Let $S\subseteq\R^{n+1}$ be a closed set, 
and consider $x(t;x_0,t_0)$ the solution of \eqref{eq:ODE_nd} with initial condition $x(t_0;x_0,t_0)=x_0\in\R^n$. We define the first hitting time of $S$ by $x(\cdot)$ starting from $(x_0,t_0)$ as
\begin{equation}\label{eq:hitting_time}
\hat t_S(x_0,t_0) := \inf\{t> t_0\,|\, (x(t;x_0,t_0),t)\in S \} \,\in\R\cup\{+\infty\}. 
\end{equation}
with the convention $\inf\emptyset=+\infty$.\medskip

We say that $S$ is reachable by $x(\cdot)$ from $(x_0,t_0)\in\R^{n+1}$ if $\hat t_S(x_0,t_0)<+\infty$.
\medskip

If S is reachable by $x(\cdot)$ from $(x_0,t_0)$, we define the hitting state as
\begin{equation}\label{eq:hitting_state}
\hat x_S(x_0,t_0) := x(\hat t_S(x_0,t_0);x_0,t_0).
\end{equation}
\end{definition}
\medskip

Suppose that the set $S\subseteq\R^{n+1}$ is defined as the zero level set of a $\mathcal C^1$ mapping $G:\R^{n+1}\rightarrow\R$, that is, $S=\left\{ (x,t)\in\R^{n+1}\,|\, G(x,t)=0 \right\}$.  
Define 
\begin{equation*}
\hat S = \left\{(x,t)\in S\,\left|\, \frac{\partial G}{\partial t}(x,t) + \langle D_xG(x,t),F(x)\rangle \neq 0 \right. \right\}.
\end{equation*}

\begin{remark}
If $x(\cdot)$ intersects $S$ at $(\hat x_S,\hat t_S)\in\hat S$, then $DG(\hat x_S,\hat t_S)\neq0$. As $DG(\hat x_S,\hat t_S)$ is the normal vector to $S$ at $(\hat x_S,\hat t_S)$, this implies that $S$ is intersected transversally at $(\hat x_S,\hat t_S)$ (c.f. \cite[Section 2.4]{bressan2007}). 

\end{remark}
\medskip


Regarding the differentiability of the hitting times $\hat t_S(\cdot)$ and hitting states $\hat x_S(\cdot)$, we have the following proposition:
\medskip

\begin{proposition}\label{prop:sensitivity_hitting}
Suppose that $S\subseteq\R^{n+1}$ is defined as a zero level set of a $\mathcal C^1$ function $G:\R^{n+1}\rightarrow \R$. Define 
\begin{equation*}
\mathcal R_S^T = \left\{ (x_0,t_0)\in\R^{n+1}\,\left|\, \substack{\displaystyle (x_0,t_0)\notin S,\, S \mbox{ is reachable by $x(\cdot)$ from }(x_0,t_0),\if{\hat t_S(x_0,t_0)<+\infty,}\fi\, \\ \displaystyle x(\cdot) \mbox{ intersects $S$ at } (\hat x_S(x_0,t_0),\hat t_S(x_0,t_0))\in\hat S} \right.\right\}
\end{equation*}

Then, $\mathcal R_S^T$ is open, and $\hat t_S(\cdot)$, $\hat x_S(\cdot)$ are differentiable on $\mathcal R_S^T$. Moreover, on $\mathcal R_S^T$, we have the relations 
\begin{eqnarray}
\frac{\partial \hat t_S}{\partial t_0}(x_0,t_0) + \langle D_{x_0} \hat t_S(x_0,t_0), F(x_0)\rangle &=& 0, \label{eq:edp_tS_1}\\[2mm]
\frac{\partial \hat x_S}{\partial t_0}(x_0,t_0) + D_{x_0} \hat x_S(x_0,t_0) F(x_0) &=& 0. \label{eq:edp_xS_1}
\end{eqnarray}
\end{proposition}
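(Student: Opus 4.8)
The plan is to pin down $\hat t_S$ locally as the $C^1$ function furnished by the implicit function theorem, and then to obtain the two transport relations \eqref{eq:edp_tS_1}--\eqref{eq:edp_xS_1} from the implicit-function formulas together with relation \eqref{eq:ODE_PDE_nd} of Proposition \ref{prop:sensibilidad_nd}. First I would fix $(\bar x_0,\bar t_0)\in\mathcal R_S^T$ and put $\hat t=\hat t_S(\bar x_0,\bar t_0)$, $\hat x=x(\hat t;\bar x_0,\bar t_0)$. Under Assumption \ref{ass:global} the flow is jointly $C^1$ in $(t,x_0,t_0)$, so $\Phi(t,x_0,t_0):=G(x(t;x_0,t_0),t)$ is $C^1$; one has $\Phi(\hat t,\bar x_0,\bar t_0)=0$ because $(\hat x,\hat t)\in S$, and
\[
\frac{\partial\Phi}{\partial t}(\hat t,\bar x_0,\bar t_0)=\frac{\partial G}{\partial t}(\hat x,\hat t)+\langle D_xG(\hat x,\hat t),F(\hat x)\rangle\neq0
\]
because $(\hat x,\hat t)\in\hat S$. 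The implicit function theorem then supplies a neighborhood $U$ of $(\bar x_0,\bar t_0)$, a radius $\delta>0$ on which $\partial_t\Phi$ keeps a constant sign, and a unique $C^1$ map $\tau:U\to(\hat t-\delta,\hat t+\delta)$ with $\tau(\bar x_0,\bar t_0)=\hat t$ and $\Phi(\tau(x_0,t_0),x_0,t_0)=0$, the value $\tau(x_0,t_0)$ being the only zero of $t\mapsto\Phi(t,x_0,t_0)$ in that interval.

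The crux is to check that $\tau$ actually computes the \emph{first} hitting time near $(\bar x_0,\bar t_0)$, i.e.\ that the trajectory does not meet $S$ strictly before $\tau(x_0,t_0)$. I would argue this in two regimes. Away from $\hat t$: on the compact interval $[\bar t_0,\hat t-\delta/2]$ the continuous map $t\mapsto\Phi(t,\bar x_0,\bar t_0)$ is nowhere zero (since $\hat t$ is the first hitting time and $(\bar x_0,\bar t_0)\notin S$), hence bounded away from $0$, so by uniform continuity of $\Phi$ on compacts it stays bounded away from $0$ for $(x_0,t_0)$ in a smaller neighborhood and $t$ in a slight enlargement of this interval. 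Near $\hat t$: on $(\hat t-\delta,\hat t)$ the constant sign of $\partial_t\Phi$ makes $t\mapsto\Phi(t,x_0,t_0)$ strictly monotone, so it is nonzero for $t<\tau(x_0,t_0)$. The two regimes together cover $(t_0,\tau(x_0,t_0))$, giving $\hat t_S=\tau$ locally; hence $\hat t_S$ is finite and $C^1$ near $(\bar x_0,\bar t_0)$, and $\hat x_S=x(\tau(\cdot);\cdot)$ is $C^1$ by composition. Openness of $\mathcal R_S^T$ then follows, since moreover $(x_0,t_0)\notin S$ ($S$ closed) and the point $(\hat x_S(x_0,t_0),\hat t_S(x_0,t_0))=(x(\tau;x_0,t_0),\tau)$ still satisfies the defining inequality of $\hat S$ for nearby data, by continuity of $\partial_t\Phi$.

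For the PDEs I would differentiate the identity $\Phi(\tau(x_0,t_0),x_0,t_0)=0$. Using $\partial_{t_0}\Phi=\langle D_xG,\partial x/\partial t_0\rangle$ and $D_{x_0}\Phi=D_xG\,D_{x_0}x$ (with the flow derivatives taken at $(\hat t_S;x_0,t_0)$, and $D_xG$, $\partial_t\Phi$ at $(\hat x_S,\hat t_S)$), the implicit function theorem gives
\[
\frac{\partial\hat t_S}{\partial t_0}=-\frac{\langle D_xG,\ \partial x/\partial t_0\rangle}{\partial_t\Phi},\qquad D_{x_0}\hat t_S=-\frac{D_xG\,D_{x_0}x}{\partial_t\Phi},
\]
whence
\[
\frac{\partial\hat t_S}{\partial t_0}+\langle D_{x_0}\hat t_S,F(x_0)\rangle=-\frac{1}{\partial_t\Phi}\Big\langle D_xG,\ \frac{\partial x}{\partial t_0}(\hat t_S;x_0,t_0)+D_{x_0}x(\hat t_S;x_0,t_0)F(x_0)\Big\rangle=0
\]
by \eqref{eq:ODE_PDE_nd} applied at $t=\hat t_S>t_0$; this is \eqref{eq:edp_tS_1}. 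For \eqref{eq:edp_xS_1} I would differentiate $\hat x_S(x_0,t_0)=x(\hat t_S(x_0,t_0);x_0,t_0)$, using $\frac{\partial x}{\partial t}(\hat t_S;x_0,t_0)=F(\hat x_S)$, to obtain $\partial_{t_0}\hat x_S=F(\hat x_S)\,\partial_{t_0}\hat t_S+\partial_{t_0}x(\hat t_S;x_0,t_0)$ and $D_{x_0}\hat x_S=F(\hat x_S)\,D_{x_0}\hat t_S+D_{x_0}x(\hat t_S;x_0,t_0)$, so that
\[
\frac{\partial\hat x_S}{\partial t_0}+D_{x_0}\hat x_S\,F(x_0)=F(\hat x_S)\Big(\frac{\partial\hat t_S}{\partial t_0}+\langle D_{x_0}\hat t_S,F(x_0)\rangle\Big)+\Big(\frac{\partial x}{\partial t_0}(\hat t_S;x_0,t_0)+D_{x_0}x(\hat t_S;x_0,t_0)F(x_0)\Big);
\]
both grouped terms vanish, the first by \eqref{eq:edp_tS_1} just established, the second again by \eqref{eq:ODE_PDE_nd}.

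The \textbf{main obstacle} I anticipate is precisely the identification $\hat t_S=\tau$: the implicit function theorem only produces \emph{some} crossing time of $S$ near $\hat t$, and one must certify it is the earliest. Because $[\bar t_0,\hat t)$ is not compact at its right endpoint, a single compactness argument will not suffice; the split into a compactness estimate on $[\bar t_0,\hat t-\delta/2]$ and a monotonicity argument on $(\hat t-\delta,\hat t)$ is exactly where the transversality hypothesis $(\hat x_S,\hat t_S)\in\hat S$ is genuinely used. Everything after this point is routine differentiation combined with Proposition \ref{prop:sensibilidad_nd}, which is what makes the Hamiltonian form \eqref{eq:ODE_PDE_nd} convenient here.
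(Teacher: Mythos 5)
Your proof is correct, and the overall architecture (implicit function theorem applied to $\Gamma(t,x_0,t_0)=G(x(t;x_0,t_0),t)$, followed by the quotient formulas combined with \eqref{eq:ODE_PDE_nd}) coincides with the paper's; the derivations of \eqref{eq:edp_tS_1} and \eqref{eq:edp_xS_1} are essentially identical to those in the text. Where you genuinely diverge is in the one nontrivial step you correctly flag as the crux: showing that the implicit branch $\tau$ equals the \emph{first} hitting time near $(\bar x_0,\bar t_0)$. The paper argues by contradiction: it assumes $\tau_S\neq\hat t_S$ on every ball, extracts a sequence with a uniform gap $\tau_S(x_0^n,t_0^n)-\hat t_S(x_0^n,t_0^n)>\delta$, passes to a convergent subsequence of hitting times, and uses closedness of $S$ plus continuity of the flow to produce a point of $S$ reached from $(\bar x_0,\bar t_0)$ strictly before $\hat t_S(\bar x_0,\bar t_0)$, contradicting minimality. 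You instead give a direct two-regime argument: a compactness bound keeping $\Phi$ away from zero on $[\bar t_0,\hat t-\delta/2]$ (suitably enlarged to the left to accommodate perturbed initial times $t_0<\bar t_0$), and strict monotonicity of $t\mapsto\Phi(t,x_0,t_0)$ near $\hat t$ coming from the constant sign of $\partial_t\Phi$, which is exactly where transversality ($(\hat x_S,\hat t_S)\in\hat S$) enters. Your version is arguably cleaner: it is constructive (it exhibits the neighborhood on which $\hat t_S=\tau$), it isolates precisely where transversality is used, and it avoids the slightly delicate justification of the uniform gap $\delta$ in the paper's contradiction argument, which rests on the uniqueness clause of the implicit function theorem and the distance from $\tau_S(x_0,t_0)$ to $A^c$. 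The paper's sequential argument, on the other hand, leans only on closedness of $S$ and lower semicontinuity-type reasoning for the hitting time, so it is a pattern that generalizes more readily when less regularity is available. Both proofs use the transversality hypothesis at the same single point, and both yield the same openness conclusion for $\mathcal R_S^T$.
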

\medskip

\begin{proof}

Since $S=G^{-1}(\{0\})$, with $G(\cdot)$ continuous, then $S$ is closed, and its complement $S^c$ is open. Let $(x_0,t_0)\in \mathcal R_S^T\subseteq S^{c}$. Then $\hat t_S(x_0,t_0)<+\infty$, and 
$\hat t_S(x_0,t_0)$ is a solution (with respect to $t$) of the equation 
\begin{equation}
\Gamma(t,x_0,t_0) := G(x(t;x_0,t_0),t) = 0.
\end{equation}


As $\Gamma(\cdot)$ is the composition of continuously differentiable functions, $\Gamma(\cdot)$ is $\mathcal C^1$ in a neighborhood of $(\hat t_S(x_0,t_0),x_0,t_0)$, and 
\begin{equation}
\begin{split}
\frac{\partial \Gamma}{\partial t}(\hat t_S(x_0,t_0),x_0,t_0) =&\, D_x G(\hat x_S(x_0,t_0),\hat t_S(x_0,t_0))F(\hat x_S(x_0,t_0)) + \frac{\partial G}{\partial t}(\hat x_S(x_0,t_0),\hat t_S(x_0,t_0)),
\end{split}
\end{equation} 
which is not null, because $(\hat x_S(x_0,t_0),\hat t_S(x_0,t_0))\in \hat S$. Thus, by the Implicit Function theorem \cite{cartan1983}, there exists an open ball $B$, neighborhood  of $(x_0,t_0)$, not intersecting $S$, and an open set $A$ containing $\hat t_S(x_0,t_0)$, such that a function $\tau_S(\cdot)$ can be defined as the unique $\mathcal C^1$ function $\tau_S:B\rightarrow A$ satisfying 
\begin{equation}\label{eq:aux_tfi_0}
\Gamma(\tau_S(x,t),x,t)=0,\quad (x,t)\in B,\quad \tau_S(x_0,t_0)=\hat t_S(x_0,t_0),
\end{equation}
and
\begin{equation}\label{eq:aux_tfi}
D \tau_S(\cdot) = \left.-\frac{D_{(x_0,t_0)} \Gamma(t,\cdot)}{\frac{\partial \Gamma}{\partial t}(t,\cdot)}\right|_{t=\tau_S(\cdot)}\quad \mbox{ on } B.
\end{equation}
Moreover, for every $(t^*,x_0^*,t_0^*)\in A\times B$ such that $\Gamma(t^*,x_0^*,t_0^*)=0$, we have $t^*=\tau_S(x_0^*,t_0^*)$.\medskip

In particular, this implies that $\hat t_S(\cdot)\leq \tau_S(\cdot)<+\infty$ on $B$. We now prove that $\hat t_S(\cdot)=\tau_S(\cdot)$ on some open ball $B'\subseteq B$ containing $(x_0,t_0)$.
\medskip

Suppose, by contradiction, that $\tau_S(\cdot)\neq \hat t_S(\cdot)$ in every open ball around $(x_0,t_0)$. Then, there exist $\delta>0$ such that for every $n\in\N$ there exists $(x_0^n,t_0^n)\in B\cap B((x_0,t_0),1/n)$ with 
\begin{equation}\label{eq:auxxx_01}
\tau_S(x_0^n,t_0^n)-\hat t_S(x_0^n,t_0^n)>\delta>0.
\end{equation}
The existence of such $\delta$ is guaranteed by the Implicit Function theorem. Indeed, for a point $(x_0^n,t_0^n)$ to satisfy $\tau_S(x_0^n,t_0^n)>\hat  t_S(x_0^n,t_0^n)$, it is necessary that $\hat  t_S(x_0^n,t_0^n)\notin A$. Since $\tau_S(x_0^n,t_0^n)\in A$, with $\tau_S(\cdot)$ continuous, for $n$ large enough, we have $|\tau_S(x_0^n,t_0^n)-\tau_S(x_0,t_0)|<\mathrm{dist}(\tau_S(x_0,t_0),A^c)/2$. We conclude the existence of such $\delta<\mathrm{dist}(\tau_S(x_0,t_0),A^c)/2$.\medskip

Since $\tau_S(\cdot)$ is continuous, for $\epsilon>0$ there exists $n_{\epsilon}\in\N$ such that
\begin{equation*}
-\infty<\inf\{t\,|\,(x,t)\in B\} \leq t_0^n\leq \hat t_S(x_0^n,t_0^n)<\tau_S(x_0^n,t_0^n)< \tau_S(x_0,t_0)+\epsilon<\infty,\quad \forall n\geq n_\epsilon,
\end{equation*} 
which means that the sequence $(\hat t_S(x_0^n,t_0^n))_{n\in\N}$ is bounded and, thus, converges (up to a subsequence) to a finite value $\hat t^{\star}$. Taking limits in \eqref{eq:auxxx_01}, using that $\tau_S(x_0,t_0)=\hat t_S(x_0)$,
\begin{equation}\label{eq:auxxx_02}
\hat t_S(x_0,t_0)-\hat t^{\star}\geq\delta>0.
\end{equation}

From the continuity of the flow $x(\cdot)$, we have that $\hat x_S(x_0^n,t_0^n) = x(\hat t_S(x_0^n,t_0^n);x_0^n,t_0^n)$ converges to $\hat x^{\star} = x(\hat t^{\star};x_0,t_0)$. Since $(\hat x_S(x_0^n,t_0^n),\hat t_S(x_0^n,t_0^n))\in S$ converges to $(\hat x^{\star},\hat t^{\star})$, with $S$ closed set, then $(\hat x^{\star},\hat t^{\star})\in S$. Thus, $(\hat x^{\star},\hat t^{\star})$ is a point in $S$, that is attained by $x(\cdot)$ from $(x_0,t_0)$ at the time $t^{\star}<\hat t_S(x_0,t_0)$. This contradicts the definition of $\hat t_S(\cdot)$. Thus, $\hat t_S(\cdot)$ coincides with $\tau_S(\cdot)$ on some open ball $B'\subseteq B$. Thus, $\hat t_S(\cdot)$ satisfies the same properties than $\tau_S(\cdot)$ as in \eqref{eq:aux_tfi_0} and \eqref{eq:aux_tfi}.
\medskip

Continuity of $x(\cdot)$ with respect to the initial conditions implies $\langle D G(\hat x_S(\cdot),\hat t_S(\cdot)),(F(\hat x_S(\cdot)),1)\rangle\neq 0 $ on $B'$, which proves that all initial conditions in $B'$ belong to $\mathcal R_S^T$. This proves that $\mathcal R_S^{T}$ is open. 
\medskip

Now, evaluating \eqref{eq:aux_tfi} in $(x_0,t_0)$ (considering that $\tau(\cdot)=\hat t_S(\cdot)$ in $B'$),
\begin{equation}\label{eq:derivada_t_hat_x0_gral}
\begin{split}
\frac{\partial \hat t_S}{\partial t_0}(x_0,t_0) = &\, -\dfrac{D_{x}G(\hat x_S(x_0,t_0),\hat t_S(x_0,t_0))\frac{\partial x}{\partial t_0}(t;x_0,t_0)|_{t=\hat t_S(x_0,t_0)}}{\langle D G(\hat x_S(x_0,t_0),\hat t_S(x_0,t_0)),(F(\hat x_S(x_0,t_0)),1)\rangle},\\[3MM]
D_{x_0} \hat t_S(x_0,t_0) = &\, -\dfrac{D_{x}G(\hat x_S(x_0,t_0),\hat t_S(x_0,t_0))D_{x_0}x(t;x_0,t_0)|_{t=\hat t_S(x_0,t_0)}}{\langle D G(\hat x_S(x_0,t_0),\hat t_S(x_0,t_0)),(F(\hat x_S(x_0,t_0)),1)\rangle}.
\end{split}
\end{equation}

Then, thanks to \eqref{eq:derivada_t_hat_x0_gral} and \eqref{eq:ODE_PDE_nd} (omiting the dependency of $\hat t_S$ and $\hat x_S$ on $(x_0,t_0)$), 
\begin{equation}\label{eq:edp_tS}
\begin{split}
\frac{\partial \hat t_S}{\partial t_0}+\langle D_{x_0}\hat t_S,F(x_0)\rangle =&\, \frac{-D_x G(\hat x_S,\hat t_S)}{\langle D G(\hat x_S,\hat t_S),(F(\hat x_S),1)\rangle} \left.\left[ \frac{\partial x}{\partial t_0}(t;x_0,t_0) + D_{x_0}x(t;x_0,t_0)F(x_0) \right]\right|_{t=\hat t_S}\\
=&\,0,
\end{split}
\end{equation}
which proves \eqref{eq:edp_tS_1}. Now, from \eqref{eq:hitting_state}, since $\hat x(\cdot)$ is composition of $\hat t_S(\cdot)$ (differentiable on $B'$) and $x(\cdot)$ (differentiable), it is differentiable on $B'$. Differentiating \eqref{eq:hitting_state} with respect to $x_0$ and $t_0$,
\begin{equation}\label{eq:hitting_state_derivatives}
\begin{split}
\frac{\partial \hat x_S}{\partial t_0}(x_0,t_0) 
=& \, F(\hat x_S(x_0,t_0))\frac{\partial \hat t_S}{\partial t_0}(x_0,t_0) + \left.\frac{\partial x}{\partial t_0}(t;x_0,t_0)\right|_{t=\hat t_S(x_0,t_0)},\\[3mm]
D_{x_0}\hat x_S(x_0,t_0) 
=& \, F(\hat x_S(x_0,t_0))D_{x_0} \hat t_S(x_0,t_0) + \left.D_{x_0}x(t;x_0,t_0)\right|_{t=\hat t_S(x_0,t_0)}.
\end{split}
\end{equation}

From \eqref{eq:hitting_state_derivatives}, \eqref{eq:edp_tS} and \eqref{eq:ODE_PDE_nd}, 
\begin{equation}\label{eq:edp_xS}
\begin{split}
\frac{\partial \hat x_S}{\partial t_0}(x_0,t_0) + D_{x_0} \hat x_S(x_0,t_0) F(x_0) =&\, F(\hat x_S(x_0,t_0))\left[ \frac{\partial \hat t_S}{\partial t_0}(x_0,t_0) + \langle D_{x_0}\hat t_S,F(x_0)\rangle \right]\\
&\, + \left.\left[ \frac{\partial x}{\partial t_0}(t;x_0,t_0) + D_{x_0}x(t;x_0,t_0)F(x_0) \right]\right|_{t=\hat t_S}\\
&=0,
\end{split}
\end{equation}
which concludes \eqref{eq:edp_xS_1}.
\end{proof}

\begin{remark}
Without the condition $(\hat x_S(x_0,t_0),\hat t_S(x_0,t_0))\in\hat S$, $\hat t_S(\cdot)$ may fail to be continuous at $(x_0,t_0)$. Consider in $\R^2$ the dynamics $\dot x_1 = 1$, $\dot x_2 = 0$, and $G((x_1,x_2),t) = -x_1^2+x_1^3-x_2^2$, whose zero level set is $S=(\{(0,0)\}\cup\{(x_1,x_2)\,|\,x_2=\pm x_1\sqrt{x_1-1},\, x_1\geq1\})\times\R$. Denote initial conditions $x_0=(x_{0,1},x_{0,2})$ at initial time $t_0$. Consider the set $D=\{((x_{0,1},0),t_0)\,|\,x_{0,1}<0,\,t_0\in\R\}$. For every $((x_{0,1},0),t_0)\in D$, we have $\hat x_S((x_{0,1},0),t_0)=(0,0)$, with $DG((0,0),t)=((0,0),0)$. If $x_{0,1}<0$, $\hat t_S((x_{0,1},0),t_0) = t_0-x_{0,1}$ and $\hat t_S((x_{0,1},x_{0,2}),t_0) \geq t_0-x_{0,1}+1$ when $x_{0,2}\neq0$. We conclude that $\hat t_S(\cdot)$ is discontinuous at every point $((x_{0,1},0),t_0)\in D$.
\end{remark}

\if{

\medskip
\section{Example: application to optimal control}\label{sec:example}

Consider the well known rocket railroad car problem \cite{BC97}
\begin{equation}\label{eq:valor_carrocohete}
V(z_0,v_0;t_0) = \min\left\{ t_f\geq t_0 \left.\,\right\vert\, \substack{\displaystyle\dot z=v,\, \dot v = u,\, u:\R\rightarrow[-1,1] \mbox{ Lebesgue measurable},\\ \displaystyle z(t_f)=v(t_f)=0,\, z(t_0)=z_0,\,v(t_0)=v_0}\right\}.
\end{equation}
Consider in this example the state variable $x=(z,v)$. Pontryagin's Maximum Principle's necessary conditions state that there exists at most one switch, with no singular arcs, for any optimal trajectory. It also gives as a candidate to optimal synthesis
\begin{equation}\label{eq:control_carrocohete}
u[z,v;t] = \left\{\begin{split}
\quad -1, &\quad [G(z,v;t)>0] \mbox{ or } [G(z,v;t)=0,\,v>0],\\
\quad 1, &\quad [G(z,v;t)<0] \mbox{ or } [G(z,v;t)=0,\,v<0],\\
\quad 0, &\quad G(z,v;t)=0,\,z=v=0,
\end{split}\right.
\end{equation} 
with $G(z,v;t) = z+v|v|/2$. Define $S:=\left\{ (z,v,t)\in\R^2\,|\,G(z,v;t)=0 \right\}$. Define also $\hat t_{S}(z_0,v_0;t_0)$ and $\hat t_{\{0\}}(z_0,v_0;t_0)$ as the hitting times of $S$ and $\{(0,0)\}\times\R$ (respectively) by the solution $x(t;z_0,v_0,t_0)$ of the dynamics driven by the feedback control $u[\cdot]$, with corresponding hitting states $\hat x_{S}(z_0,v_0;t_0)=(\hat z_S(z_0,v_0;t_0),\hat v_S(z_0,v_0;t_0))$ and $\hat x_{\{0\}}(z_0,v_0;t_0)=(0,0)$ (respectively). The terminal time of the process driven by $u[\cdot]$, starting from the point $(z_0,v_0)$ at initial time $t_0$, is
\begin{equation}\label{eq:tiempo_carrocohete}
w(z_0,v_0;t_0) = \hat t_{\{0\}}(\hat z_S(z_0,v_0;t_0),\hat v_S(z_0,v_0;t_0);\hat t_{S}(z_0,v_0;t_0))
\end{equation}

We claim the control $u[\cdot]$ defined in \eqref{eq:control_carrocohete} is optimal. Indeed, thanks to \cite[Theorem 7.3.3]{bressan2007}, it is enough to show that $w(\cdot)$, as in \eqref{eq:tiempo_carrocohete}, satisfies the HJB equation
\begin{equation}\label{eq:HJB_carrocohete}
-1 -\frac{\partial w}{\partial z_0}v_0 + \max_{u\in[-1,1]}\left\{- \frac{\partial w}{\partial v_0}u \right\} = 0,
\end{equation}
on $(\R^2\times\R)\backslash S$, with the maximizer $u$ in \eqref{eq:HJB_carrocohete} at $(z_0,v_0,t_0)$ coinciding with $u[z_0,v_0,t_0]$. 
\medskip

It is not difficult to check that every trajectory $x(\cdot;z_0,v_0,t_0)$ driven by $u[\cdot]$, with $(z_0,v_0,t_0)\notin S$, intersects $S$ transversally at $\hat x_S(z_0,v_0;t_0)$. A simple calculation yields, for $(z_0,v_0,t_0)\in S$, 
\begin{equation}\label{eq:hat_t0_S}
\hat t_{\{0\}}(z_0,v_0;t_0) = t_0 + |v_0|.
\end{equation}
Now, for $(z_0,v_0,t_0)\notin S$, a simple argument allows to say that
\begin{equation}\label{eq:signo_hatv_S}
{\rm sign}(\hat v_S(z_0,v_0;t_0)) = -{\rm sign}(G(z_0,v_0;t_0)).
\end{equation}

From \eqref{eq:tiempo_carrocohete}, combining \eqref{eq:hat_t0_S} and \eqref{eq:signo_hatv_S}, we get 
\begin{equation}\label{eq:dw}
\frac{\partial w}{\partial t_0} = \frac{\partial \hat t_S}{\partial t_0}  -{\rm sign}(G) \frac{\partial \hat v_S}{\partial t_0} ,\quad
\frac{\partial w}{\partial z_0} = \frac{\partial \hat t_S}{\partial z_0}  -{\rm sign}(G) \frac{\partial \hat v_S}{\partial z_0} ,\quad
\frac{\partial w}{\partial v_0} = \frac{\partial \hat t_S}{\partial v_0}  -{\rm sign}(G) \frac{\partial \hat v_S}{\partial v_0} .
\end{equation}

On the other hand, from Proposition \ref{prop:sensitivity_hitting}, we obtain that, if $(z_0,v_0,t_0)\notin S$, 
\begin{equation}\label{eq:auxx1}
\frac{\partial \hat t_S}{\partial z_0}v_0  = -\frac{\partial \hat t_S}{\partial t_0}  +  {\rm sign}(G)\frac{\partial \hat t_S}{\partial v_0},\quad  
\frac{\partial \hat z_S}{\partial z_0}v_0  = -\frac{\partial \hat z_S}{\partial t_0}  +  {\rm sign}(G)\frac{\partial \hat z_S}{\partial v_0},
\quad \frac{\partial \hat v_S}{\partial z_0}v_0  = -\frac{\partial \hat v_S}{\partial t_0}  +  {\rm sign}(G)\frac{\partial \hat v_S}{\partial v_0}.
\end{equation}

Replacing \eqref{eq:dw} in the left-hand side of \eqref{eq:HJB_carrocohete}, and using \eqref{eq:auxx1}, \eqref{eq:HJB_carrocohete} is equivalent to 
\begin{equation}\label{eq:HJB_carrocohete_eq}
-1+\frac{\partial \hat t_S}{\partial t_0}-{\rm sign}(G)\frac{\partial \hat v_S}{\partial t_0} - {\rm sign}(G)\frac{\partial \hat t_S}{\partial v_0} + \frac{\partial \hat v_S}{\partial v_0}  + \max_{u\in[-1,1]}\left\{ -u\left(  \frac{\partial \hat t_S}{\partial v_0} - {\rm sign}(G)\frac{\partial \hat v_S}{\partial v_0} \right) \right \} = 0.
\end{equation}

Thus, the HJB equation on $w$ is translated into a PDE on the hitting time $\hat t_S(\cdot)$ and state $\hat x_S(\cdot)=(\hat z_S(\cdot),\hat v_S(\cdot))$, without further knowledge of their explicit expression. To prove the optimality of $u[\cdot]$ (that is, that \eqref{eq:HJB_carrocohete_eq} is satisfied), it is enough to show that it coincides with the maximizer of the expression between braces in \eqref{eq:HJB_carrocohete_eq}, and that $w(\cdot)$ satisfies \eqref{eq:HJB_carrocohete_eq}. Summarizing, these two conditions are
\begin{eqnarray}
{\rm sign}\left(\frac{\partial \hat t_S}{\partial v_0} - {\rm sign}(G)\frac{\partial \hat v_S}{\partial v_0}\right) &=& {\rm sign}(G) \label{eq:cond_max}\\
-1+\frac{\partial \hat t_S}{\partial t_0}-{\rm sign}(G)\frac{\partial \hat v_S}{\partial t_0}  &=& 0 \label{eq:cond_hjb}
\end{eqnarray}
\medskip

A direct calculation gives, for $(z_0,v_0,t_0)\notin S$, 
\begin{equation}\label{eq:auxx3}
\hat t_{S}(z_0,v_0;t_0) = \left\{\begin{split}
\quad t_0+v_0+\sqrt{\frac{v_0^2}{2}+z_0},\quad G(z_0,v_0;t_0)>0,\\
\quad t_0-v_0+\sqrt{\frac{v_0^2}{2}-z_0},\quad G(z_0,v_0;t_0)<0,
\end{split}\right.
\end{equation}
and 
\begin{equation}\label{eq:auxx4}
\left(\hat z_{S}(z_0,v_0;t_0),\hat v_{S}(z_0,v_0;t_0)\right) = \left\{\begin{split}
\quad \left( \frac{1}{2}\left( z_0+\frac{v_0^2}{2} \right) , -\sqrt{ z_0+\frac{v_0^2}{2} } \right),\quad G(z_0,v_0;t_0)>0,\\
\quad \left( -\frac{1}{2}\left( \frac{v_0^2}{2}-z_0 \right) , \sqrt{ \frac{v_0^2}{2}-z_0 } \right),\quad G(z_0,v_0;t_0)<0,
\end{split}\right.
\end{equation}

Hence, from \eqref{eq:auxx3} and \eqref{eq:auxx4}, \eqref{eq:cond_hjb} is trivially checked, and it is not difficult to check that the sign of $G(\cdot)$ determines the sign of the left-hand side of \eqref{eq:cond_max}. Then, \eqref{eq:cond_max} is also satisfied. As conclusion, $w(\cdot)$ coincides with $V(\cdot)$, and all the trajectories driven by $u[\cdot]$ are optimal.

}\fi

\medskip
\section{Example of application to optimal control}\label{sec:ejemplo_gen}

In this section, we illustrate how to use the obtained results in an optimal control problem. Consider the one-dimensional system 
\begin{equation}\label{eq:sistema_ejemplo}
\dot x(t) = f(x(t)) + u(t) g(x(t)), \quad t>t_0,\qquad x(t_0)=x_0,
\end{equation}
with $u(\cdot)$ in the set of admissible controls
\begin{equation}\label{eq:control_admisible_ejemplo}
\mathcal U := \left\{ u:[0,T]\rightarrow [-1,1] \,|\, u(\cdot) \mbox{ Lebesgue measurable} \right\}.
\end{equation} 
Suppose we are interested in the minimization, with respect to $u(\cdot)\in\mathcal U$, of the fixed final time cost functional
\begin{equation}\label{eq:costo_ejemplo}
J(x_0,t_0;u(\cdot)) = \int_{t_0}^T l_x(x(t)) + l_u(x(t))u(t)\,dt
\end{equation}
with $x(\cdot)$ solution of \eqref{eq:sistema_ejemplo} associated to $u(\cdot)$. Define the value function associated to the family of problems with different initial conditions: 
\begin{equation}\label{eq:valor_ejemplo}
V(x_0,t_0) = \min\left\{ J(x_0,t_0;u(\cdot)) \,|\, x(\cdot) \mbox{ solution of \eqref{eq:sistema_ejemplo}},\, u(\cdot)\in \mathcal U\right\}.
\end{equation}
Under standard assumptions on $f(\cdot),g(\cdot),l_x(\cdot),l_u(\cdot)$ for existence of solutions of the optimal control problems that define $V(\cdot)$, $V(\cdot)$ is a viscosity solution of the HJB equation
\begin{equation}\label{eq:hjb_ejemplo}
-\frac{\partial V}{\partial t_0} -l_x(x_0) - f(x_0)\frac{\partial V}{\partial x_0} + \max_{u\in [-1,1]} \left\{  -u\left(l_u(x_0) + g(x_0)\frac{\partial V}{\partial x_0}\right) \right\} = 0, \quad V(\cdot,T) = 0.
\end{equation}

The system is control-affine. Suppose that, via Pontryagin's maximum principle, we are able to prove that the optimal controls are of bang-bang type, with at most one switch, and $u=-1$ during the last part of the trajectory. Suppose that a switch locus can be identified in the state-time space, as the zero level set of a $\mathcal C^1$ function $G(\cdot)$, that is, $S=\{(x,t)\in\R\times[0,T]\,|\, G(x,t)=0\}$. Define $G^{+} = \{(x,t)\in\R\times[0,T]\,|\, G(x,t)>0\}$ and $G^{-} = \{(x,t)\in\R\times[0,T]\,|\, G(x,t)<0\}$. Suppose, without loss of generality, that $\R\times\{T\}\subseteq G^{-}$, and that $G^{-}\cup S$ is positively invariant under the dynamics \eqref{eq:sistema_ejemplo} with $u=-1$. Moreover, suppose that for every $(x_0,t_0)\in G^{+}$, $x(\cdot;x_0,t_0)$ generated with constant control $u=1$ intersects $S$ transversally. 
%
%
We conjecture that the optimal feedback control is 
\begin{equation}\label{eq:control_ejemplo}
u[x,t] = \left\{\quad \begin{split}
-1,&\quad \mbox{ if } (x,t)\in G^{-}\cup S,\\
1,&\quad \mbox{ if } (x,t)\in G^{+}.
\end{split}
\right.
\end{equation}

To prove the conjecture, define $x^{+}(t;x_0,t_0)$ and $x^{-}(t;x_0,t_0)$ the solutions of \eqref{eq:sistema_ejemplo} with constant controls $u=1$ and $u=-1$, respectively. For initial time $t_d$, initial state $x_d$, and final time $t_f$, the costs 
\begin{equation}
\begin{split}
J^{-}(x_d,t_d,t_f) =&\, \int_{t_d}^{t_f} \left(\,l_x(x^{-}(t;x_d,t_d)) - l_u(x^{-}(t;x_d,t_d) )\,\right) \,dt,\\
J^{+}(x_d,t_d,t_f) =&\, \int_{t_d}^{t_f} \left(\,l_x(x^{+}(t;x_d,t_d)) + l_u(x^{+}(t;x_d,t_d) )\,\right) \,dt,
\end{split}
\end{equation}
and the candidate to value function associated to $u[\cdot]$
\begin{equation}\label{eq:cand_valor_ejemplo}
w(x_0,t_0) = \left\{\begin{split}
J^{-}(x_0,t_0,T),&\quad (x_0,t_0)\in G^{-}\cup S,\\
\quad J^{+}(x_0,t_0,\hat t_S(x_0,t_0)) + J^{-}(\hat x_S(x_0,t_0),\hat t_S(x_0,t_0),T),&\quad (x_0,t_0)\in G^{+},
\end{split}\right.
\end{equation}
where $\hat t_S(x_0,t_0)$ and $\hat x_S(x_0,t_0)$ are the hitting time and hitting state of $S$ by $x^{+}(\cdot;x_0,t_0)$. 
Thus, according to \cite[Corollary 7.3.4]{bressan2007}, it suffices to prove that $w(\cdot)$ is solution of \eqref{eq:hjb_ejemplo} in $G^{-}\cup G^{+}$.

Notice that, for every fixed $t_f$, since $J^{-}(\cdot,\cdot,t_f)$ and $J^{+}(\cdot,\cdot,t_f)$ use a constant control value, they satisfy the Hamilton-Jacobi equations
\begin{equation}\label{eq:hj_ejemplo}
\left\{\quad\begin{split}
-(l_x(x_d)-l_u(x_d))-\frac{\partial J^{-}}{\partial t_d}(x_d,t_d,t_f)-(f(x_d)-g(x_d))\frac{\partial J^{-}}{\partial x_d}(x_d,t_d,t_f) \,&= 0,\quad t_d<t_f, \\
-(l_x(x_d)+l_u(x_d))-\frac{\partial J^{+}}{\partial t_d}(x_d,t_d,t_f)-(f(x_d)+g(x_d))\frac{\partial J^{+}}{\partial x_d}(x_d,t_d,t_f) \,&= 0,\quad t_d<t_f,
\end{split}\right.
\end{equation}

with boundary conditions $J^{-}(x_d,t_f,t_f)=0$, $J^{+}(x_d,t_f,t_f)=0$, $x_d\in\R$. We also have, due to Proposition \ref{prop:sensitivity_hitting}, that for any $(x_0,t_0)\in G^{+}$, under the control $u[\cdot]$,
\begin{equation}\label{eq:sensibilidad_ejemplo}
\frac{\partial \hat t_S}{\partial t_0} + (f(x_0)+g(x_0))\frac{\partial \hat t_S}{\partial x_0} = 0,\quad \frac{\partial \hat x_S}{\partial t_0} + (f(x_0)+g(x_0))\frac{\partial \hat x_S}{\partial x_0} = 0.
\end{equation}

Denote $\xi_{0T}=(x_0,t_0,T)$, $\xi_{0S}=(x_0,t_0,\hat t_S(x_0,t_0))$, $\xi_{ST}=(\hat x_S(x_0,t_0),\hat t_S(x_0,t_0),T)$. Thus, to prove that $w(\cdot)$ is solution of \eqref{eq:hjb_ejemplo} in $G^{-}\cup G^{+}$, it is enough to prove the inequalities
\begin{equation}\label{eq:hjb_desig_ejemplo}
\begin{split}
l_u(x_0)+g(x_0)\frac{\partial J^{-}}{\partial x_i}{(\xi_{0T})} \,&> 0,\quad  (x_0,t_0)\in G^{-},\\ 
l_u(x_0) + g(x_0)\left( \frac{\partial J^{+}}{\partial x_d}(\xi_{0S}) + \frac{\partial J^{+}}{\partial t_f}(\xi_{0S})\frac{\partial \hat t_S}{\partial x_0} + \frac{\partial J^{-}}{\partial x_d}(\xi_{ST})\frac{\partial \hat x_S}{\partial x_0} + \frac{\partial J^{-}}{\partial t_d}(\xi_{ST})\frac{\partial \hat t_S}{\partial x_0} \right) \,&< 0,\quad  (x_0,t_0)\in G^{+},
\end{split}
\end{equation}
since, if these inequalities are satisfied, then \eqref{eq:hjb_desig_ejemplo}, along with \eqref{eq:hj_ejemplo} and \eqref{eq:sensibilidad_ejemplo}, give the desired result.\medskip

It is worth to remark that, at this stage, it is necessary to compute derivatives of the hitting times and states with respect to fewer variables. In the previous expression, we have chosen to keep the derivatives of the hitting times and states with respect to $x_0$, but, from \eqref{eq:sensibilidad_ejemplo}, we could have chosen to express  \eqref{eq:hjb_desig_ejemplo} in terms of the derivatives with respect to $t_0$ instead, depending on which are easier to compute.  Also, depending of the functions involved, some of the derivatives of $J^{+}$ and $J^{-}$ can be more easily obtained; for instance, 
\begin{equation*}
\frac{\partial J^{+}}{\partial t_f}(\xi_{0S}) = l_x(\hat x_S(x_0,t_0)) + l_u(\hat x_S(x_0,t_0)).
\end{equation*}


\medskip
\section{Discussion}\label{sec:conclusions}

In this article, we derive a sensitivity formula for the solutions of an ODE with respect to initial data. Using the obtained result, we prove the differentiability of the hitting times of a set (given as the zero level set of a $\mathcal C^1$ function) by the solution of an ODE, under transversality conditions and non vanishing gradient at the hitting point. This differentiability result is not true in general, since the hitting times are defined as the minimum time function of attaining a set. In general, Lipschitz-continuity results for the minimum time function can be found, in the context of optimal control, using the Petrov's inward pointing condition with respect to the target set \cite{BC97,cannarsa2004}. Nevertheless, in uncontrolled systems, this condition cannot be assured. 
We also obtain a sensitivity relation for the first hitting time and the corresponding hitting state.
\smallskip

These results can be applied to the verification of optimality of a control, in control-affine problems, with fixed final time or free final time. 
%
Suppose that Pontryagin's principle allows us to characterize the behavior of extremals (among which the optimal solutions can be found, should they exist), identifying different possible switching locus and feedback controls in the $(x,t)$ space. Thus, for each of these (feedback) controls associated to a Pontryagin extremal, a candidate of value function (optimal cost depending on initial data) can be obtained via integration of the running cost, depending on the trajectory of the ODE under the corresponding control. This cost will depend of the state and time of the switches, corresponding to the hitting times and states of some set (typically, the switching locus). Thus, to verify that the candidate to value function is, in fact, optimal, we can verify that it is solution of the HJB equation in each patch (e.g. \cite[Theorem 7.3.3, Corollary 7.3.4]{bressan2007}). Thus, it will be necessary to compute the derivatives of the hitting times and states with respect to their respective initial conditions. The results of the present work suit well this procedure, by simplifying these computations (even without knowing the explicit formula of these hitting times and states). This procedure is illustrated in Section \ref{sec:ejemplo_gen}.


\smallskip


\section{Acknowledgments}
This work was supported by ANID, Chile, through project ANID FONDECYT 3180367. The author thanks Prof. Pedro Gajardo for fruitful discussions and useful comments, and Prof. Alain Rapaport for important remarks and clarifying examples.

\section{Conflict of interests}
The author declares that there is no conflict of interest in relation to the
results of this paper.


\bibliographystyle{plain}
\bibliography{biblio}



\end{document}